\date{}
\newcommand{\R}{\ensuremath{\mathbb{R}}}
\newtheorem{thm}{Theorem}[section]
\newtheorem{defn}{Definition}[section]
\newtheorem{lem}{Lemma}[section]
\newtheorem{clm}{Claim}[section]
\begin{document}

\title{The Extremal Spheres Theorem}

\author{Arseniy Akopyan\thanks{Research supported in part by RFBR grants 08-01-00565-a and 10-01-00096-a}
\and Alexey Glazyrin\thanks{Research supported in part by RFBR grant 08-01-00565-a}
\and Oleg R. Musin \thanks{Research supported in part by NSF
grant DMS-0807640 and NSA grant MSPF-08G-201.}
\and Alexey Tarasov\thanks{Research supported in part by RFBR grant 08-01-00565-a}}


\maketitle

\begin{abstract}
Consider a polygon $P$ and all neighboring circles (circles going
through three consecutive vertices of~$P$). We say that a
neighboring circle is extremal if it is empty (no vertices of $P$
inside) or full (no vertices of $P$ outside). It is well known
that for any convex polygon there exist at least two empty and at
least two full circles, i.e. at least four extremal circles. In
1990 Schatteman considered a generalization of this theorem for
convex polytopes in $d$-dimensional Euclidean space. Namely, he
claimed that there exist at least $2d$ extremal neighboring
spheres for generic polytopes. His proof is based on the Bruggesser-Mani shelling method.

In this paper, we show that there are certain gaps in Schatteman's
proof.  We also show that using the Bruggesser-Mani-Schatteman method it is possible to prove that there are at least $d+1$ extremal neighboring  spheres. However, the
existence problem of $2d$ extremal neighboring spheres is still
open.
\end{abstract}

\section{Introduction}

In 1990 Schatteman \cite{Sch} published a four-vertex type theorem for polyhedrons. 
We must emphasize that this was a unique generalization of the four-vertex theorem for higher dimensions.

Consider a $d$-dimensional simplicial polytope $P$ in
$d$-dimensional Euclidean space. We call this polytope generic if
it has no $d+2$ cospherical vertices and is not a $d$-dimensional
simplex. From now on, we consider only generic simplicial
polytopes. Each $(d-2)$-dimensional face uniquely defines a
neighboring sphere going through the vertices of two facets
sharing this $(d-2)$-dimensional face. A neighboring sphere is
called empty if it does not contain other vertices of $P$ and it
is called full if all other vertices of $P$ are inside of it. We
call an empty or full neighboring sphere extremal. Schatteman
(\cite{Sch}, Theorem 2, p.232) proves the following claim.

\begin{clm} [Schatteman, 1990]
\label{thm:Schatteman} For any convex  $d$-dimensional polytope
$P$ there are at least $d$ different $(d-2)$-dimensional faces defining
empty neighboring spheres and at least $d$ different $(d-2)$-dimensional
faces defining full neighboring spheres.
\end{clm}

This result for $d=2$ is well known (see Section 2). Actually, it immediately follows from Theorem 3.1 in Section 3 of our paper. Unfortunately, already for d=3 the proof
by Schatteman contains a crucial gap (see Section 6). At the moment we do not know if the claim is correct.

In Section 7, using Bruggesser-Mani-Schatteman's method, we
show that there are at least two different $(d-2)$-dimensional faces defining
empty neighboring spheres and at least two different $(d-2)$-dimensional faces defining full neighboring spheres. Moreover, we prove that there are at least $d+1$ extremal (full and empty) neighboring  spheres.

In his paper, Schatteman did not necessarily consider simplicial
and generic polytopes. The lack of these conditions can only add
some minor technical difficulties (for instance, the neighboring
sphere is not well defined by a non-simplicial $(d-2)$-dimensional
face and Delaunay triangulation is not determined without the
generic assumption), which do not affect the main arguments. So
in this paper we consider only simplicial and generic polytopes.

\section{Four-vertex type theorems for polygons}

We define an {\it oval\/} as a convex smooth closed plane curve.
The classical four-vertex theorem by Mukhopadhayaya \cite{Mukh}
published in 1909 says the following: {\it The curvature function
on an oval has at least four local extrema (vertices).} It is well
known that any continuous function on a compact set has at least
two (local) extrema: a maximum and a minimum. It turns out that
the curvature function has at least four local extrema. The paper
was noticed, and generalizations of the result appeared almost
immediately. In 1912, A.~Kneser \cite{AKn} showed that convexity
is not a necessary condition and proved the four-vertex theorem
for a simple closed plane curve.

The famous book \cite{Bla} by W.~Blaschke (first published in
1916), together with other generalizations, contains a
``relative'' version of the four-vertex theorem. Here we preserve
the formulation and notation from \cite{Bla}. {\it Let $C_1$ and
$C_2$ be two (positively oriented) convex closed curves, and let
$do_1$ and $do_2$ be arc elements at points with parallel (and
codirected) support lines. Then the ratio $do_1/do_2$ has at least
four extrema.} In the case where $C_2$ is a circle, this theorem
becomes the theorem on four vertices of an oval.

In 1932, Bose~\cite{Bose} published a remarkable version of the
four-vertex theorem in a global sense. While in the classical
four-vertex theorem the extrema are defined ``locally,'' here they
are defined ``globally.'' Let $G$ be an oval such that no four
points lie on a circle. We denote by $s_-$ and $s_+$
($\text{resp}$., $t_-$ and  $t_+$) the number of its circles of
curvature ($\text{resp}$., the circles which are tangent to $G$ at
exactly three points) lying inside ($-$) and outside ($+$) the
oval $G$, respectively (the {\it curvature circle\/} of $G$ at a
point $p$ is tangent to $G$ at $p$ and has radius $1/k_G(p)$,
where $k_G(p)$ is the curvature of $G$ at $p$). In this notation,
we have the relation $s_- - t_- = s_+ - t_+ = 2.$ If we define
{\it vertices\/} as the points of tangency of the oval $G$ with
its circles of curvature lying entirely inside or outside $G$,
then these formulas imply that the oval $G$ has at least four
vertices. It is worth mentioning that this fact was proved by
H.~Kneser \cite{HKn} ten years before Bose.

Since then, publications related to the four-vertex theorem did
not halt and their number considerably increased throughout recent
years (see~\cite{Ar1,Ar2}, etc.) to a large extent due to papers
and talks by V.~I.~Arnold. In the above papers, various versions
of the four-vertex theorem for plane curves and convex curves in
${\mathbb R}^d$, and their special points (vertices) are
considered: critical points of the curvature function, flattening
points, inflection points, zeros of higher derivatives, etc. The
paper by Umehara \cite{Ume} and the book \cite{Pak} by Pak  contain long lists of
papers devoted to these topics. Several interesting results were
obtained by Tabachnikov (\cite{Tab1}, \cite{Tab2}) also in this
direction.

It is interesting to note that the first discrete analog of the
four-vertex theorem arose almost 100 years before its smooth
version. In 1813, the splendid paper by Cauchy on rigidity of
convex polyhedra used the following lemma: {\it Let $M_1$ and
$M_2$ be convex $n$-gons with sides and angles $a_i$, $\alpha_i$
and $b_i$, $\beta_i$, respectively. Assume that $a_i = b_i$ for
all $i=1,\dots,n$. Then either $\alpha_i = \beta_i$, or the
quantities $\alpha_i - \beta_i$ change sign for $i=1,\dots,n$  at
least four times.}

In Aleksandrov's book \cite{Al}, the proof of uniqueness of a
convex polyhedron with given normals and areas of faces involves a
lemma where the angles in the Cauchy lemma are replaced by the
sides. We present a version of it, which is somewhat less general
than the original one. {\it Let $M_1$ and $M_2$ be two convex
polygons on the plane that have respectively parallel sides.
Assume that no parallel translation puts one of them inside the
other. Then when we pass along $M_1$ (as well as along $M_2$), the
difference of the lengths of the corresponding parallel sides
changes the sign at least four times.}

We easily see the resemblance between the above relative
four-vertex theorem for ovals (apparently belonging to Blaschke)
and the Cauchy and Aleksandrov lemmas. Furthermore, approximating
ovals by polygons, we can easily prove the Blaschke theorem with the
help of any of these lemmas.

The Cauchy and Aleksandrov lemmas easily imply four-vertex
theorems for a polygon:

\medskip

\noindent({\bf Corollary of the Cauchy lemma}) {\it Let
$M$ be an equilateral convex polygon. Then at least two
angles of $M$ do not exceed the neighboring angles, and at least
two angles of $M$ are not less than the neighboring
angles.}

\medskip

\noindent({\bf Corollary of the Aleksandrov lemma}) {\it Let all angles of a
polygon $M$ be pairwise equal. Then at least two sides of
$M$ do not exceed their neighboring sides, and at least two
sides of $M$ are not less than their neighboring sides.}

\medskip

In applications, the {\it curvature radius\/} at a vertex of a
polygon is usually calculated as follows. Consider a polygon $M$
with vertices $A_1, \dots, A_n$. Each vertex $A_i$ has two
neighbors: $A_{i-1}$ and $A_{i+1}$. We define the curvature radius
of $M$ at $A_i$ as follows: $R_i(M)$ is equal to the circumradius
of $\triangle A_{i-1}A_{i}A_{i+1}.$

\begin{thm}[\cite{Mus1}]
Assume that $M$ is a convex polygon and that for each vertex $A_i$
of $M$, the circumcenter of $\triangle A_{i-1}A_{i}A_{i+1}$ lies
inside the angle $\angle{A_{i-1}A_{i}A_{i+1}}$. Then the theorem
on four local extrema holds true for the (cyclic) sequence of the
numbers $R_1(M)$, $R_2(M)$, \dots, $R_n(M)$, i.e., at least two of
the numbers do not exceed the neighboring ones, and at least two
of the numbers are not less than the neighboring ones.
\end{thm}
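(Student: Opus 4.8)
The plan is to translate the statement about local extrema of the circumradius sequence into the well-known fact (quoted in the abstract) that a convex polygon has at least two \emph{empty} and at least two \emph{full} neighboring circles, and to use the hypothesis on the circumcenters precisely to make this translation valid.

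First I would fix an edge $e_i=A_iA_{i+1}$, place it on the $x$-axis and put the interior of $M$ in the upper half-plane. The two neighboring circles sharing this edge are $S_i$ (through $A_{i-1},A_i,A_{i+1}$) and $S_{i+1}$ (through $A_i,A_{i+1},A_{i+2}$); their centers lie on the $y$-axis at heights $c_i$ and $c_{i+1}$, and $R_i=\sqrt{a^2+c_i^2}$, $R_{i+1}=\sqrt{a^2+c_{i+1}^2}$ with $2a=|A_iA_{i+1}|$. The key computation is that, since $A_{i-1}$ and $A_{i+2}$ both lie in the upper half-plane, $A_{i+2}$ lies outside $S_i$ iff $c_{i+1}>c_i$. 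I would then invoke the hypothesis: the circumcenter of each $\triangle A_{j-1}A_jA_{j+1}$ lies inside the angle at $A_j$, which forces it to lie on the interior side of each of the two edges through $A_j$; in particular $c_i>0$ and $c_{i+1}>0$. With both heights positive, $c_{i+1}>c_i$ is equivalent to $R_{i+1}>R_i$, yielding the sign lemma
\[
A_{i+2}\ \text{lies outside}\ S_i \iff R_{i+1}>R_i ,
\]
and, applied to the edge $A_{i-1}A_i$, the companion equivalence $A_{i-2}\ \text{outside}\ S_i \iff R_{i-1}>R_i$.

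Granting the sign lemma, an \emph{empty} circle $S_i$ (no vertex of $M$ strictly inside) has both $A_{i-2}$ and $A_{i+2}$ outside, hence $R_{i-1}>R_i$ and $R_{i+1}>R_i$, so $i$ is a local minimum of $R_1,\dots,R_n$; dually a \emph{full} circle gives a local maximum. It therefore suffices to produce at least two empty and at least two full neighboring circles. For this I would lift the vertices to the paraboloid via $A\mapsto(A,|A|^2)$, under which ``inside/outside a circle'' becomes ``below/above the spanned plane.'' The lifted points $\tilde A_i$ are in convex position, and the cycle of edges $\widetilde{A_iA_{i+1}}$ is exactly the silhouette separating the lower hull from the upper hull of their convex hull $Q$; hence every $\tilde A_i$ lies on both hulls and neither hull has interior vertices. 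Each hull is thus a triangulation of an $n$-gon, an empty circle $S_i$ corresponding to a lower-hull ear (a triangle on the consecutive vertices $\tilde A_{i-1},\tilde A_i,\tilde A_{i+1}$) and a full circle to an upper-hull ear. The two-ears theorem gives at least two ears on each hull, i.e. at least two empty and at least two full neighboring circles, which completes the argument.

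The main obstacle is the sign lemma rather than the combinatorial count: the conclusion genuinely fails without the circumcenter hypothesis, because if a circumcenter falls on the far side of an edge the correspondence between circumradius monotonicity and the empty/full relation reverses, and an empty circle need no longer be a local minimum. The delicate step is thus to verify that ``circumcenter inside the angle'' pins \emph{both} relevant centers to the interior side of their common edge, so that $|c_i|<|c_{i+1}|$ may legitimately be replaced by $c_i<c_{i+1}$ throughout. What remains is only bookkeeping: handling the degenerate cases (four concyclic vertices, or a circumradius tie) by a limiting argument, so that the non-strict formulation ``do not exceed''/``are not less than'' follows from the strict, generic one.
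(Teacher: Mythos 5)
Your proposal is correct, but note that the paper itself gives no proof of this statement: it is quoted from \cite{Mus1} and used as background. What you have written is therefore a genuine proof rather than a reproduction, and its logical skeleton is a reduction of the four-extrema statement to Theorem~\ref{thm:circles} (at least two empty and two full neighboring circles), which the paper does prove in Section~2 by an elementary counting argument ($2s_-+u_-=n$ minus $s_-+t_-+u_-=n-2$ gives $s_--t_-=2$), whereas you re-derive the weaker bound $s_\pm\ge 2$ by lifting to the paraboloid and invoking the two-ears theorem for triangulations of an $n$-gon --- exactly the machinery of the paper's Section~3, so both halves of your argument are consistent with the paper's toolkit. The genuinely new ingredient, and the one that carries the hypothesis of the theorem, is your sign lemma; I checked it and it is right: with the edge $A_iA_{i+1}$ on the $x$-axis and endpoints at $(\pm a,0)$, the circle with center $(0,c)$ through them satisfies $x^2+y^2-2cy-a^2=0$, so for $A_{i+2}=(x_0,y_0)$ with $y_0>0$ one gets $x_0^2+y_0^2-2c_iy_0-a^2=2(c_{i+1}-c_i)y_0$, whence ``outside $S_i$'' is equivalent to $c_{i+1}>c_i$; and since $R=\sqrt{a^2+c^2}$ is monotone in $c$ only for $c>0$, the circumcenter hypothesis (which pins both centers to the interior side of the shared edge) is exactly what converts $c_{i+1}>c_i$ into $R_{i+1}>R_i$. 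You correctly identify that one positivity is needed for the empty/minimum direction and the other for the full/maximum direction. Two small polish points: distinct ears of the triangulation have distinct middle vertices, which is what guarantees \emph{two} indices realizing the extrema; and the non-generic case is handled more cleanly by running the whole argument with non-strict inequalities (a lower-hull ear still has no vertex strictly inside its circumcircle, giving $R_{i\pm1}\ge R_i$ directly) than by a limiting argument, since the weak formulation of the conclusion is exactly what survives.
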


The generalization of this theorem for the case of non-convex
polygons was given by V.~D.~Sedykh \cite{Sed1}, \cite{Sed2}.
Furthermore, this theorem generalizes the four-vertex theorems
following from the Cauchy and Aleksandrov lemmas.

A circle $C$ passing through certain vertices of a polygon $M$ is
said to be {\it empty\/} (respectively, {\it full\/}) if all the
remaining vertices of $M$ lie outside (respectively, inside) $C$.
The circle $C$ is {\it extremal\/} if $C$ is empty or full.

\begin{thm}[Folklore]
\label{thm:circles}
Let $M=A_1\dots A_n$ be a convex $n$-gon,
$n>3$, no four vertices of which lie on one circle. Then at least
two of the $n$ circles $C_i(M)$ (the circumcircle
 of $\triangle A_{i-1}A_{i}A_{i+1})$, $i=1,\dots,n$, are empty and at least two
of them are full, i.e., there are at least four extremal circles.
\end{thm}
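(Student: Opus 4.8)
The plan is to reinterpret the notions of \emph{empty} and \emph{full} circles through the theory of Delaunay triangulations and then to exploit a simple combinatorial fact about triangulations of a convex polygon. First I would note that, since no four vertices of $M$ are concyclic, the point set $\{A_1,\dots,A_n\}$ is in general position, so its Delaunay triangulation $\mathcal{D}$ is well defined and unique. Because all $n$ points lie on the boundary of $\operatorname{conv}(M)=M$, the triangulation $\mathcal{D}$ has no interior vertices and is therefore a triangulation of the polygon $M$ using only its sides and diagonals; it consists of exactly $n-2$ triangles, each having an empty circumcircle.

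The key identification is the following dictionary. A circle $C_i(M)$ is empty precisely when the triangle $A_{i-1}A_iA_{i+1}$ belongs to $\mathcal{D}$ and its two edges $A_{i-1}A_i$ and $A_iA_{i+1}$ are sides of $M$, that is, precisely when it is an \emph{ear} of $\mathcal{D}$. Indeed, if $C_i(M)$ is empty then $A_{i-1}A_iA_{i+1}$ has an empty circumcircle and hence lies in $\mathcal{D}$; conversely an ear of $\mathcal{D}$ is a triangle bounded by two consecutive sides of $M$, so its apex is some $A_i$ and its (empty) circumcircle is exactly $C_i(M)$. Thus the empty circles are in bijection with the ears of $\mathcal{D}$.

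To count the ears I would pass to the dual tree of the triangulation: since $M$ is convex and $\mathcal{D}$ has no interior vertices, the graph whose nodes are the triangles of $\mathcal{D}$ and whose edges join triangles sharing a diagonal is a tree on $n-2\ge 2$ nodes. The ears are exactly the leaves of this tree, and every tree with at least two nodes has at least two leaves; hence $\mathcal{D}$ has at least two ears, giving at least two empty circles. For the full circles I would run the identical argument with the farthest-point Delaunay triangulation $\mathcal{D}^{*}$ in place of $\mathcal{D}$: each of its triangles has a circumcircle containing all the points, and because every vertex of $M$ is a hull vertex, $\mathcal{D}^{*}$ is again a triangulation of $M$ with no interior vertices. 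Its ears correspond to full circles $C_i(M)$, and the same tree argument yields at least two of them, for a total of at least four extremal circles.

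The step I expect to require the most care is pinning down the ear--extremal-circle dictionary rigorously: confirming that both $\mathcal{D}$ and $\mathcal{D}^{*}$ really are triangulations of $M$ with all vertices on the boundary, so that ``ear'' forces three consecutive vertices and the relevant circle is genuinely one of the $C_i(M)$, and checking that the hypothesis ``no four concyclic'' is exactly what guarantees uniqueness and the absence of degenerate quadrilateral faces. An appealing alternative, which also foreshadows the higher-dimensional setting of the paper, is to prove the statement by lifting the vertices $A_j=(x_j,y_j)$ to the paraboloid $\hat{A}_j=(x_j,y_j,x_j^{2}+y_j^{2})$: empty and full circles then correspond to the downward- and upward-facing triangular faces of the convex hull of the lifted points, and the ear count becomes a statement about the two caps of this simplicial $3$-polytope.
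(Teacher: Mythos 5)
Your proof is correct and follows essentially the same route as the paper's: the paper's Section 2 likewise shows that the empty-circumcircle (resp.\ full-circumcircle) triangles form a triangulation of $M$ --- i.e.\ the Delaunay and farthest-point Delaunay triangulations --- and that the extremal neighboring circles are exactly the circumcircles of its ears. The only difference is the final counting step: you use the two-leaves-of-a-tree argument to get at least two ears directly, while the paper counts boundary edges ($2s_-+u_-=n$ against $s_-+t_-+u_-=n-2$) to obtain the sharper Bose relation $s_--t_-=2$, of which the stated theorem is a corollary.
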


(S. E. Rukshin told one of the authors that this result for many
years has been included in the list of problems for training for
mathematical competitions and is well known to St. Petersburg
school students attending mathematical circles.)

Theorem \ref{thm:circles} was also generalized for the case of
non-convex polygons by Sedykh \cite{Sed1}, \cite{Sed2}.

It is easy to see a direct generalization of the Bose theorem for
polygons.

{\it We denote by $s_-$ and $s_+$ the numbers of empty and full
circles among the circles $C_i(M)$, and we denote by $t_-$ and
$t_+$ the numbers of empty and full circles passing through three
pairwise non-neighboring vertices of $M$, respectively. Then, as
before, we have $ s_- - t_- = s_+ - t_+ = 2.$}

This fact was suggested by Musin as a problem for the All-Russia
mathematics competition of high-school students in 1998. It will
be proved in the next section by means of elementary planar
geometry methods.

One more generalization of the Bose theorem is given in
\cite{Weg}, where one considers the case of an {\it equilateral\/}
polygon, which is not necessarily convex.

V.~D.~Sedykh \cite{Sed1} proved a theorem on four support planes
for weakly convex polygonal lines in ${\mathbb R}^3$: {\it If any
two neighboring vertices of a polygon $M$ lie on an empty circle,
then at least four of the circles $C_i(M)$ are extremal.} It is
clear that convex and equilateral polygons satisfy this condition.
Furthermore, Sedykh constructed examples of polygons showing that
his theorem is wrong without this assumption (see \cite{Sed2}).

This short survey cannot be considered  as complete. For more details see \cite{WM,Mus3,Pak,Ume}.

\section{The Bose theorem for polygons}

Here we provide an elementary proof for the two-dimensional case of
the extremal spheres theorem. The proof of all statements from this section is the same for
empty and full circles, so without loss of generality, we consider only empty circles.

To be consistent with
\cite{Pak} we call a circle through three vertices of a polygon
{\it neighboring} if these three vertices are consecutive, {\it disjoint} if
there are no adjacent vertices among these three and {\it intermediate}
in all other cases.

\begin{thm}
Let $Q \in R^2$ be generic convex polygon with $n$ vertices and $n
\geq 4$. Denote by $s_+, t_+$ and $u_+$ the number of full circles
that are neighboring, disjoint and intermediate, respectively.
Similarly, denote by $s_-, t_-$ and $u_-$ the number of empty
circles that are neighboring, disjoint and intermediate,
respectively. Then
$$s_+ - t_+ = s_- - t_- = 2,$$ $$s_+ + t_+ + u_+ = s_- + t_- + u_-
= n-2$$
\end{thm}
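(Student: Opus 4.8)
The plan is to realize both the empty and the full circles as the triangles of two triangulations of $Q$, and then to extract both identities from a single edge-counting argument. Lift each vertex $A_i=(x_i,y_i)$ to the paraboloid point $\widehat A_i=(x_i,y_i,x_i^2+y_i^2)\in\R^3$. The standard in-circle computation shows that a vertex $A_m$ lies inside (resp. outside) the circle through $A_i,A_j,A_k$ precisely when $\widehat A_m$ lies below (resp. above) the plane through $\widehat A_i,\widehat A_j,\widehat A_k$. Hence $\triangle A_iA_jA_k$ has an empty circumcircle iff $\widehat A_i\widehat A_j\widehat A_k$ is a face of the lower convex hull of $\{\widehat A_m\}$, and a full circumcircle iff it is a face of the upper convex hull. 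Since the $A_i$ are in convex position, every vertex survives the vertical projection, so the lower hull projects to a triangulation $\mathcal D_-$ of $Q$ (the Delaunay triangulation) and the upper hull to a triangulation $\mathcal D_+$ of $Q$ (the furthest-point Delaunay triangulation); the generic hypothesis guarantees that all hull faces are genuine triangles. This reduces the theorem to counting faces and edges of a triangulation of a convex $n$-gon, and makes the empty and full cases literally symmetric, as promised in the text.

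Any triangulation of a convex $n$-gon has exactly $n-2$ triangles, which immediately yields $s_\pm+t_\pm+u_\pm=n-2$. For the first identity I would classify each triangle by the number of sides of $Q$ it contains. A neighboring circle corresponds to an ear $A_{i-1}A_iA_{i+1}$, containing the two polygon sides $A_{i-1}A_i$ and $A_iA_{i+1}$; an intermediate circle has exactly one pair of consecutive vertices and hence contains exactly one polygon side; a disjoint circle contains none. Because each of the $n$ sides of $Q$ is a boundary edge, it belongs to exactly one triangle of the triangulation, so counting incidences between triangles and sides of $Q$ gives
\[
2s_\pm + u_\pm = n .
\]
Subtracting $s_\pm+t_\pm+u_\pm=n-2$ from this gives $s_\pm-t_\pm=2$, establishing both identities at once.

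I expect the main obstacle to lie not in the counting, which is elementary once a triangulation is in place, but in cleanly establishing the dictionary between circles and hull faces: that an empty (resp. full) circumcircle corresponds \emph{exactly} to a lower (resp. upper) hull triangle, and that these triangles tile all of $Q$ without gaps or overlaps. Here the genericity assumption (no four cocircular vertices) is precisely what rules out degenerate non-triangular hull faces, while convex position is what guarantees that no vertex is lost under projection, so that $\mathcal D_\pm$ covers the whole polygon. Since the paper favors elementary planar methods, an alternative is to bypass the lift and argue directly that the set of empty-circumcircle triangles forms a triangulation of $Q$ (uniqueness and the empty-disk property of the Delaunay triangulation, with the full case handled symmetrically by inversion); either route feeds into the same incidence count $2s_\pm+u_\pm=n$, which is the crux of the argument.
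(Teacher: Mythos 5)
Your proposal is correct, and the combinatorial core --- that a triangulation of a convex $n$-gon has $n-2$ triangles and that counting triangle/side incidences gives $2s_\pm+u_\pm=n$, whence $s_\pm-t_\pm=2$ by subtraction --- is exactly the paper's argument. Where you diverge is in how you establish that the empty-circumcircle (resp.\ full-circumcircle) triangles tile $Q$: you lift to the paraboloid and identify them with the lower (resp.\ upper) hull faces, which is precisely the machinery the paper defers to Section~3 and uses only for the higher-dimensional results. The paper's Section~2 proof instead stays in the plane and proves the triangulation property via two elementary lemmas: (i) two triangles with empty circumcircles cannot overlap, because if the circumcircles meet in $A$ and $B$ the two triangles lie on opposite sides of the line $AB$; and (ii) across any diagonal $BC$ of an empty-circumcircle triangle there is another empty-circumcircle triangle $BCD$, obtained by taking $D$ to maximize $\angle BDC$ on the far side. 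Your route buys the empty/full symmetry for free (reflection of the lifted picture) and is the one that scales to dimension $d$, but it does quietly use that convex position puts every vertex on the upper hull, so that the furthest-point Delaunay triangulation really uses all $n$ vertices --- a point worth stating explicitly, and one the elementary planar argument never needs to confront. Either way the identity $2s_\pm+u_\pm=n$ rests on the fact that each of the $n$ sides of $Q$ lies in exactly one triangle of the respective triangulation, which you state correctly.
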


\begin{proof}
Let us prove that triangles with empty circumcircles form a
triangulation of $Q$. The proof is based on two lemmas.

\begin{lem}
Triangles with empty circumcircles do not intersect
\end{lem}

\includegraphics[scale=1]{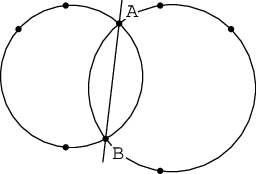}

\begin{proof}
If the respective circumcircles $\omega_1, \omega_2$ corresponding to each triangle do not intersect, then
the statement of this lemma is obvious. So we assume they
intersect in points $A$ and $B$. All vertices of the first triangle lie
on the arc of $\omega_1$ outside of $\omega_2$ and all vertices of
the second triangle lie on the arc of $\omega_2$ outside of
$\omega_1$. But, these two arcs lie in different half-planes with
respect to line $AB$. Thus the triangles do not intersect.
\end{proof}

\begin{lem}
If triangle $ABC$ has an empty circumcircle and $BC$ is a
diagonal of $Q$, then there exists a triangle $BCD$ with an empty
circumcircle.
\end{lem}

\includegraphics[scale=1]{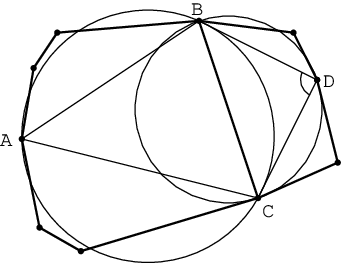}

\begin{proof}
Consider all angles $\angle BFC$ such that $F$ is a vertex of $Q$
and is in a different half-plane than $A$ with respect to $BC$.
Suppose $\angle BDC$ is the maximal angle in this set (it is
unique due to the generic assumption). Then obviously triangle
$BCD$ is the triangle required.
\end{proof}

From these two lemmas it follows that triangles with empty circles
form a triangulation.

The total number of triangles in a triangulation of a polygon with $n$ vertices is always $n-2$,
so the second part of the theorem is proved.

Each triangle corresponding to a neighboring circle has two edges
of $Q$ as its own edges. Each triangle corresponding to an
intermediate circle has one edge of $Q$ as its own edge. Each
triangle corresponding to a disjoint circle has no edges of $Q$ as
its own edges. Hence, $2 s_- + u_- = n$. We subtract $s_- + t_- +
u_- = n-2$ from this equality and obtain that $s_- - t_- = 2$.
\end{proof}

From this theorem we can obtain the theorem on extremal circles.
We have that $s_- - t_- = 2$ and therefore $s_- \geq 2$.
Analogously, $s_+ \geq 2$ proving that each generic convex polygon
possesses at least two full and at least two empty neighboring
circles.

The proof of this theorem has a very reasonable explanation in
terms of Delaunay triangulations which we will consider in the next section.

\section{Constructing Delaunay triangulations by lifting to a spherical paraboloid}

Suppose $S$ is a set of points in $\R^d$, the affine dimension of $S$
is $d$ and there are no $d+2$ cospherical points in $S$.
We define a Delaunay (upper Delaunay) simplex as a simplex, whose circumsphere
contains no (all) points of $S$.
A Delaunay (upper Delaunay) triangulation of $S$ is a triangulation of $S$ consisting of all Delaunay (upper Delaunay) simplices.
In this section, we show how these triangulations can be constructed (and simultaneously prove
that they exist) for the set of vertices of a $d$-dimensional convex
generic simplicial polytope.

Consider the set $S$ of all vertices of the $d$-dimensional polytope
$P$. Let $DT(S)$ denote a Delaunay triangulation of the set $S$
and $UDT(S)$ be an upper Delaunay triangulation of $S$.

Here we use a method of constructing $DT(S)$ and $UDT(S)$ by
lifting all the points of $S$ to a spherical paraboloid. The idea
of this construction belongs to Voronoi \cite{Vor}. We define a
function $f:\R^d\longrightarrow \R^{d+1}$ such that for
$X(x_1,x_2,\ldots,x_d) \in \R^d$, its image
$f(X)=(x_1,x_2,\ldots,x_d,x_1^2+x_2^2+\ldots+x_d^2)$. Then, for
all points $X_1,X_2,\ldots,X_n \in S$, we consider the set $S'$ of
$f(X_1),f(X_2),\ldots,f(X_n)$. Because of the generic assumption
on $P$ there are no $d+2$ points of $S'$ on the same
$d$-dimensional hyperplane. Let $CH(S')$ be the convex hull of the
set $S'$. Obviously, $CH(S')$ is a simplicial polytope and each
$f(X_i)$ is a vertex of this polytope. For each facet $F_j$ of
$CH(S')$, consider the exterior normal of this facet $n_j$. We
then divide all facets into two groups subject to the following
rule: if the last coordinate of $n_j$ is negative, we place $F_j$
in the first group and if it is positive, we place $F_j$ in the
second group (since $P$ is simplicial, it can never be equal to
0).

\begin{lem}
Projections of all facets of the first group to $\R^d$ give
$DT(S)$. Projections of all facets of the second group to
$\R^d$ give $UDT(S)$.
\end{lem}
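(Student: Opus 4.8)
The plan is to establish the classical correspondence between the lower/upper convex hull of the lifted points and the Delaunay/upper-Delaunay triangulations, using the defining algebraic property of the lifting map $f$.

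First I would record the key algebraic identity underlying the whole paraboloid construction. Let me write the paraboloid as the graph $x_{d+1}=\sum_k x_k^2$. For a point $X\in\R^d$ lying on a sphere with center $c$ and radius $r$, i.e. $|X-c|^2=r^2$, expanding gives $\sum_k x_k^2 = 2\langle c,X\rangle + (r^2-|c|^2)$. This says precisely that $f(X)$ lies on the affine hyperplane $H_{c,r}$ in $\R^{d+1}$ defined by $x_{d+1}=2\langle c,x\rangle+(r^2-|c|^2)$. Moreover a point is inside the sphere ($|X-c|^2<r^2$) if and only if its lifted image lies strictly below this hyperplane (i.e. $f(X)$ has smaller last coordinate than the point of $H_{c,r}$ above it), because $\sum_k x_k^2$ is the actual height while the right-hand side is the hyperplane height. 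This is the single computation that converts the metric predicate ``inside a sphere'' into the affine predicate ``below a hyperplane'', and I would state it as the first step.

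Next I would use this to characterize the two groups of facets. Take a facet $F_j$ of $CH(S')$ spanned by the lifts $f(X_{i_0}),\dots,f(X_{i_d})$ of $d+1$ affinely independent points of $S$; these lifts determine a unique hyperplane $H$, which by the generic assumption equals some $H_{c,r}$ where $c,r$ are the circumcenter and circumradius of the simplex $\sigma=X_{i_0}\cdots X_{i_d}$. Since $F_j$ is a facet of the convex hull, all other lifted points $f(X_i)$ lie weakly on one side of $H$; by the identity from Step~1, the exterior normal of $F_j$ pointing downward (negative last coordinate) corresponds exactly to all remaining lifted points lying above $H$, which translates back to all remaining vertices $X_i$ lying outside the circumsphere of $\sigma$ -- that is, $\sigma$ is a Delaunay simplex. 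Symmetrically, an upward-pointing normal (positive last coordinate) forces all other vertices inside the circumsphere, making $\sigma$ an upper Delaunay simplex. Thus first-group facets project to Delaunay simplices and second-group facets to upper Delaunay simplices.

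Finally I would argue that these projections actually tile $S$, i.e. form triangulations, rather than merely being collections of Delaunay simplices. The lower faces of $CH(S')$ (those visible from $x_{d+1}=-\infty$) project bijectively onto a subdivision of the convex hull of $S$ in $\R^d$: every point of $\mathrm{conv}(S)$ lies under exactly one lower facet, so the projections cover $\mathrm{conv}(S)$ with disjoint interiors, and since $CH(S')$ is simplicial each lower facet is a $d$-simplex. This gives a genuine triangulation, and by Step~2 it consists precisely of all Delaunay simplices, so it is $DT(S)$; the upper facets give $UDT(S)$ analogously. The main obstacle I anticipate is not the algebra of Step~1 but the bookkeeping in Step~3: one must justify carefully that the lower hull projects onto all of $\mathrm{conv}(S)$ with no overlaps and that the generic hypothesis (no $d+2$ cospherical points, equivalently no $d+2$ coplanar lifted points) is exactly what guarantees simpliciality and uniqueness, so that ``Delaunay triangulation'' is well defined and coincides with the projected lower hull.
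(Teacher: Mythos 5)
Your proposal is correct and follows essentially the same route as the paper: lift the points to the paraboloid, identify the hyperplane through a facet with a sphere in $\R^d$ via the standard algebraic identity, and translate the sign of the last coordinate of the exterior normal into the empty/full condition on the circumsphere. You are in fact somewhat more careful than the paper's own proof, which omits the constant term in the hyperplane equation and leaves implicit the verification (your Step~3) that the projected lower facets actually tile $\mathrm{conv}(S)$ and exhaust all Delaunay simplices.
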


\begin{proof}
Consider a facet $F$ from the first group. Suppose the equation of
the $d$-dimensional hyperplane containing $F$ is $x_{d+1}=a_1 x_1
+ \ldots + a_d x_d$. Consider the intersection of this hyperplane
with the paraboloid $x_{d+1}=x_1^2+ \ldots + x_d^2$. Thus, $a_1
x_1 + \ldots + a_d x_d = x_1^2+ \ldots + x_d^2$ and $(x_1 - \frac
{a_1} 2)^2 + \ldots + (x_d - \frac {a_2} 2)^2 = \frac {a_1^2} 4 +
\ldots + \frac {a_d^2} 4$. Hence the projection of the
intersection of this hyperplane with the paraboloid is a sphere.
Now we notice that $F$ is from the first group so for all vertices
of $S'$ on the paraboloid we have that $x_{d+1}
\geq a_1 x_1 + \ldots + a_d x_d$. So, for their projections $(x_1 -
\frac {a_1} 2)^2 + \ldots + (x_d - \frac {a_2} 2)^2 \geq \frac
{a_1^2} 4 + \ldots + \frac {a_d^2} 4$. Thus all the vertices are
outside of the circumsphere of the projection of $F$, so the
projection of $F$ is a Delaunay simplex.

The proof is exactly the same for $UDT(S)$.
\end{proof}

Notice that because of convexity of $P$ $DT(S)$ and $UDT(S)$ are
also triangulations of $P$. So further we will call them Delaunay
triangulation of $P$ and upper Delaunay triangulation of $P$.

\section{Different types of ``ears'' and connections between them}

\begin{defn}
Consider a convex generic simplicial polytope $P$ and its Delaunay
triangulation $DT(P)$ (upper Delaunay triangulation $UDT(P)$). We
say that a simplex $S \in DT(P)$ ($S \in UDT(P)$) is a D-ear
(UD-ear) if at least two facets of $S$ are on the boundary of $P$.
\end{defn}

There is a direct connection between ears and extremal spheres:
the sphere circumscribed around any ear is extremal and the
simplex on the vertices of $P$ inscribed in the extremal sphere is
an ear.

In his paper, Schatteman uses the concept of shellability. Let us give
a formal definition of a polytopal complex and its shelling (see
also \cite{Zie}).

{\it A polytopal complex} $C$ in $\R^d$ is a collection of
polytopes in $\R^d$ such that 1) the empty set is in $C$, 2) for
any polytope $T \in C$ every face of $T$ is also in $C$, 3) the
intersection of any two polytopes in $C$ is a face of both.

The dimension of $C$ is the largest dimension of a polytope in
$C$. Inclusion-maximal faces of a complex are called facets. If
all facets are of the same dimension then we call $C$ pure.

Every $0$-dimensional complex is called shellable, its shelling is
any ordering of facets. Let $C$ be a pure $d$-dimensional
polytopal complex. $C$ is called {\it shellable} if there exists
an ordering ({\it shelling}) of its facets $(F_1,F_2,\ldots,F_m)$
such that $\forall s: 2 \leq s \leq m; (\bigcup
\limits_{t=1}^{s-1} F_t)\bigcap F_s$ is a beginning of a shelling
of $\partial F_s$. If complex $C$ is a $d$-cell then each partial
union $\bigcup \limits_{t=1}^{s} F_t$, $1\leq s \leq m$, of its
shelling is homeomorphic to a $d$-dimensional disk (if it is a
$d$-sphere, then $\bigcup \limits_{t=1}^{s} F_t$ is homeomorphic
to a disk for all $s<m$ and to a sphere for $s=m$).

%

We say that facet $F$ of a polytope $P$ is {\it visible} from
point $A$ if $A$ and $P$ are in different open half-spaces with
respect to the hyperplane through $F$. In the classical paper by
Bruggesser and Mani \cite{BM} it was proved that the complex of
facets of some polytope visible from some point is shellable, and
the method of constructing a shelling is given. Connect this point
with some internal point of the polytope and the order by which
hyperplanes determined by the facets of the polytope intersect
this segment (starting from interior of the polytope) is the order
of corresponding simplices in a shelling.

This method can be applied to the simplicial complex of $DT(P)$.
We use the method of lifting to a spherical paraboloid from the
previous section. Suppose we obtain a $(d+1)$-dimensional polytope
$P'$ from this procedure. Then we know that the lower facets
(facets for which the last coordinates of normals are negative)
correspond to simplices in $DT(P)$. In fact, the simplicial
complex of $DT(P)$ and the complex of lower facets are isomorphic.
Obviously, there exists a sufficiently low point in $\R^{d+1}$
such that the set of facets visible from it is exactly the set of
lower facets. Thus the following lemma is true:

\begin{lem}[\cite{Sch}, Lemma 1, p.237]
The Delaunay (upper Delaunay) simplicial complex is shellable.
\end{lem}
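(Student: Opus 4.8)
The plan is to read the Delaunay complex off of the lifted polytope $P'$ and then invoke the Bruggesser--Mani result quoted just above, which already tells us that the complex of facets of a polytope visible from a single point is shellable. Concretely, I would first recall from the previous section that the lower facets of $P'=CH(S')$ (those whose outer normal has negative last coordinate) project bijectively onto the simplices of $DT(P)$, and that this projection is a combinatorial isomorphism between the complex of lower facets and $DT(P)$. Since shellability is preserved under such an isomorphism, it suffices to prove that the complex of lower facets of $P'$ is shellable; the same argument applied to the upper facets then handles $UDT(P)$.

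Next I would identify the lower facets of $P'$ with the facets visible from an auxiliary point placed far below the paraboloid. Take $A=(a_1,\dots,a_d,-M)$ for some fixed $(a_1,\dots,a_d)$ and large $M>0$. A facet $F$ with outer normal $\nu=(\nu_1,\dots,\nu_{d+1})$ is visible from $A$ precisely when $A$ lies in the outer open half-space of the hyperplane carrying $F$, i.e.\ when $\langle \nu, A\rangle > \langle \nu, p\rangle$ for $p\in F$. Because $P$ is simplicial and generic, no facet of $P'$ is vertical, so $\nu_{d+1}\neq 0$ for every facet. The only term of $\langle\nu,A\rangle$ that grows with $M$ is $-M\nu_{d+1}$: for a lower facet $\nu_{d+1}<0$ this tends to $+\infty$ and $F$ becomes visible, while for an upper facet $\nu_{d+1}>0$ it tends to $-\infty$ and $F$ becomes invisible. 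As there are only finitely many facets, I can choose $M$ large enough that simultaneously every lower facet is visible from $A$ and no upper facet is. Hence the set of facets of $P'$ visible from $A$ is exactly the set of lower facets.

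Having matched the two complexes, I would conclude by applying the Bruggesser--Mani theorem directly to the point $A$: the complex of facets of $P'$ visible from $A$ is shellable. By the identification of the previous paragraph this is the complex of lower facets, and by the combinatorial isomorphism of the first paragraph it is isomorphic to $DT(P)$, so $DT(P)$ is shellable. Replacing ``below'' by ``above'' (take $A$ with last coordinate $+M$) makes exactly the upper facets visible and yields shellability of $UDT(P)$.

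The step I expect to be the main obstacle is the clean identification in the middle paragraph: verifying that a single finite point can be made to see all lower facets and none of the upper ones at once. The genericity and simpliciality hypotheses, which guarantee $\nu_{d+1}\neq 0$ so that every facet is unambiguously lower or upper and none is vertical, are exactly what make the uniform choice of $M$ possible; without them a nearly vertical facet could remain visible or invisible in a way that spoils the bijection with $DT(P)$. The remaining ingredients, namely that the projection is a combinatorial isomorphism and that shellability transfers across it, are routine once this visibility bookkeeping is in place.
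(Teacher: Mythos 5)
Your proposal is correct and follows the same route as the paper: lift to the paraboloid, identify the lower (resp.\ upper) facets of $P'$ with the facets visible from a point far below (resp.\ above), and invoke Bruggesser--Mani. The paper dismisses the visibility identification with ``obviously there exists a sufficiently low point''; your middle paragraph simply supplies the details of that step.
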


In his paper, Schatteman proves the following lemma:

\begin{lem}[\cite{Sch}, Lemma 2, p.237]
The last simplex in the Delaunay (upper Delaunay) shelling is a
D-ear (UD-ear).
\end{lem}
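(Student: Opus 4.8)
The plan is to work directly with a shelling $(F_1,\dots,F_m)$ of $DT(P)$, whose existence is guaranteed by the previous lemma, and to analyze the last simplex $F_m$. Since $DT(P)$ triangulates the $d$-ball $P$, every partial union $\bigcup_{t=1}^{s}F_t$ is a $d$-disk, and the whole complex is the disk $P$ with boundary sphere $\partial P$. Each maximal simplex of $DT(P)$ has $d+1$ faces of dimension $d-1$; because $P$ is simplicial, such a face either is shared with an adjacent simplex (an interior facet) or lies on $\partial P$, in which case it is exactly a facet of the polytope $P$. Thus $F_m$ is a D-ear precisely when at least two of its facets lie on $\partial P$, and the set $R:=(\bigcup_{t<m}F_t)\cap F_m$ furnished by the shelling condition is nothing but the union of the interior facets of $F_m$; the boundary facets are those not contained in $R$.

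First I would dispose of the extreme case of no boundary facet. If $F_m$ had no facet on $\partial P$, then $\bigcup_{t<m}F_t=P\setminus\operatorname{int}F_m$ would be the $d$-disk $P$ with an interior open ball removed, which is not a disk (its boundary acquires an extra sphere component), contradicting the fact that every partial union of a shelling of a cell is a disk. Hence $F_m$ has at least one boundary facet. This already recovers the weak statement, but by itself it only guarantees a single boundary facet.

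The hard part will be upgrading ``at least one'' to ``at least two.'' Here I would argue by contradiction: suppose $F_m$ has exactly one facet $G$ on $\partial P$, so that $R=\partial F_m\setminus\operatorname{relint}(G)$ consists of the remaining $d$ facets $H_1,\dots,H_d$, all interior and all containing the apex $v$ of $F_m$ opposite $G$. The essential input, which shellability alone cannot supply, is that $DT(P)$ has no interior vertices: every vertex lies on $\partial P$ because the points are in convex position. Indeed, shellability by itself does not force a D-ear — coning a triangulated ball from an interior point yields a shellable ball in which \emph{every} maximal simplex has exactly one boundary facet — so the convex-position hypothesis must be used in an honest way. I would try to exploit it by studying the link of $v$ together with $\partial K$, where $K:=\bigcup_{t<m}F_t$: removing $F_m$ trades the polytope facet $G$ for the $d$ facets $H_1,\dots,H_d$ through $v$ in $\partial K$, and I would aim to show that $K$ cannot simultaneously be a shellable disk and respect the no-interior-vertex structure around $v$. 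Converting this geometric picture into a rigorous contradiction is precisely the delicate step, and I expect it to be the main obstacle; given the conclusions announced in the abstract, this is also the point at which I would be most suspicious that the original argument contains a gap.
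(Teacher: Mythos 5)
Your setup coincides with the paper's: you take the last simplex $F_m$ of a shelling, use that the partial unions, $F_m$, and $P$ itself are $d$-disks, dispose of the case of no boundary facet, and correctly isolate convex position (every vertex of $F_m$ is a vertex of $P$ and hence lies on $\partial P$) as the indispensable extra hypothesis --- your coning example showing that shellability alone cannot force an ear is a sound sanity check. But the proposal stops exactly where the lemma begins: you declare that upgrading ``at least one boundary facet'' to ``at least two'' is ``the delicate step'' and ``the main obstacle,'' and you only state an intention to study the link of the apex $v$ inside $\partial K$ without carrying the argument out. Since that step is the entire content of the lemma, this is a genuine gap, not a completed proof.

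The paper closes the gap with one short observation that your plan circles around but never lands on: look at $\partial F_m\cap\partial P$ rather than at $K$ and the link of $v$. Writing $K=\bigcup_{t<m}F_t$, the shelling condition makes $R=K\cap F_m$ a nonempty proper beginning of a shelling of $\partial F_m\cong S^{d-1}$ (nonempty since $m\geq 2$, proper since you have already produced one boundary facet), so the remaining part $\partial F_m\cap\partial P$ of $\partial F_m$ must be homeomorphic to a $(d-1)$-dimensional disk. On the other hand this set contains all $d+1$ vertices of $F_m$, because each of them is a vertex of the convex polytope $P$ and therefore lies on $\partial P$. A $(d-1)$-disk inside the boundary of a $d$-simplex that contains all $d+1$ vertices must contain at least two facets: a single facet omits one vertex, and adjoining that vertex through lower-dimensional faces yields a set that is not a $(d-1)$-manifold, hence not a disk. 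In your notation, in the one-boundary-facet scenario $\partial F_m\cap\partial P$ would consist of $G$ together with the apex $v$ (and possibly some lower-dimensional faces through $v$), which is disconnected from the $(d-1)$-dimensional part and so cannot be a $(d-1)$-disk --- this is precisely the contradiction you were trying to manufacture from the link of $v$.
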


\begin{proof}
Consider the last simplex $T_m$ from the Delaunay shelling.
$(\bigcup \limits_{t=1}^{m-1} T_t), (\bigcup \limits_{t=1}^{m}
T_t) = P$ and $T_m$ are homeomorphic to a $d$-dimensional disk.
Hence $\partial T_m \bigcap \partial P$ must be homeomorphic to a
$(d-1)$-dimensional disk. But this common bound contains all
vertices of $T_m$. Thus it contains at least two facets of $T_m$,
so $T_m$ is an ear.

The proof for the case of upper Delaunay is the same.
\end{proof}

\begin{defn}
The last simplex of some shelling of $DT(P)$ ($UDT(P)$) obtained
by the method of Bruggesser and Mani for $P'$ is called a BMD-ear
or Delaunay BM-ear (BMUD-ear or upper Delaunay BM-ear).
\end{defn}

\section{The status of Schatteman's theorem}

Here we briefly observe Schatteman's proof of his theorem and show
that there are gaps that cannot be filled by his ideas.

In order to prove the theorem, Schatteman considered two cases: 1)
each Delaunay simplex is a D-ear, 2) there is a Delaunay simplex
which is not a D-ear.

The proof for the second case was based on the following claim:

\begin{clm}
If there exists Delaunay simplex which is not a BMD-ear then there
are at least $d$ BMD-ears.
\end{clm}

(Although this claim was not formulated in his paper, Schatteman
tries to prove it and uses it in the proof of his main theorem.)

We show that the proof of this claim is wrong. Moreover, we
give a counterexample in dimension three. We think that this
counterexample can be generalized for higher dimensions and
consequently the claim is completely false.

His proof was the following. He lifted vertices of $P$ to a
paraboloid and constructed the convex hull $P'$ of the set of
lifted vertices. Then, for all lower facets of $P'$, he
constructed a polyhedron $E$ defined by the intersection of open
half-spaces determined by the facets and not containing $P'$. He
considered the vertex of $E$ and tried to find a shelling by the
Bruggesser and Mani method using a line connecting this vertex and
the interior of $P'$. The problem here is that the Bruggesser-Mani
method allows us to find a shelling only for the set of facets
visible from a given point. From the chosen point in Schatteman's
proof, there can be some upper facets that are visible. Hence his
method cannot give us a shelling of the complex of lower facets
which correspond to Delaunay cells.

Actually there are even configurations that contradict this
claim. Here is an example of the three-dimensional polytope
that has only two BMD-ears.

\includegraphics[scale=1]{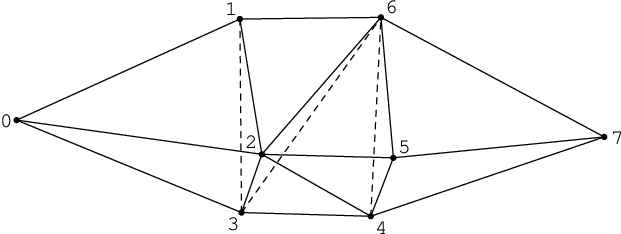}

As it can be seen from the Delaunay triangulation of this
polytope, it has five D-ears. However only two of them are
BMD-ears -- $(0,1,2,3)$ and $(4,5,6,7)$.

\includegraphics[scale=1]{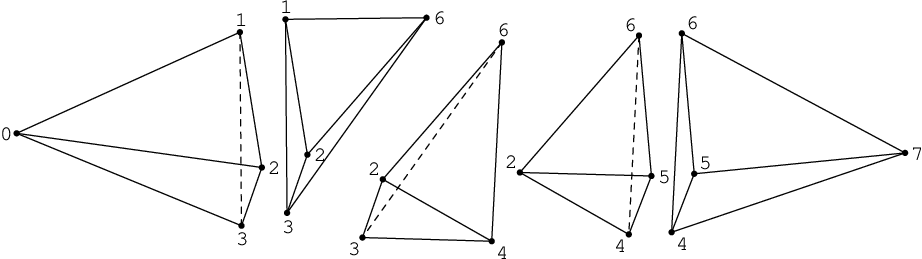}

Thus, the existence problem of $2d$ extremal neighboring spheres
is still open.

\section{Theorems on the number of ears}

In this section we prove results pertaining to the number of ears that can be
obtained using Schatteman's method.

\begin{thm}\label{thm:2 BM-ears}
For each generic convex simplicial $d$-dimensional polytope there
exist at least two Delaunay and at least two upper Delaunay
BM-ears.
\end{thm}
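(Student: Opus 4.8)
The plan is to produce two distinct Delaunay BM-ears as the two ``ends'' of a single line shelling of the lower facets of $P'$, and symmetrically for the upper facets. Recall the dictionary from the previous sections: the Delaunay complex $DT(P)$ is isomorphic to the complex of lower facets of the lifted polytope $P'\subset\R^{d+1}$, and a Bruggesser--Mani shelling of $DT(P)$ is obtained by choosing an interior point $O$ of $P'$ and a ``sufficiently low'' point $A$ (seeing exactly the lower facets), then ordering the lower facets $F_1,\dots,F_m$ by the order in which their supporting hyperplanes meet the segment $OA$, starting from $O$. Since $O$ lies above the entire lower hull and $A$ below it, every lower hyperplane separates $O$ from $A$, so all $m$ of them are crossed; the last one crossed gives $F_m$, which is a D-ear by Schatteman's Lemma~(\cite{Sch}, Lemma~2). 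Thus $F_m$ is one BMD-ear, and the whole task is to exhibit a second one.

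First I would embed this shelling into the full Bruggesser--Mani walk on the sphere $\partial P'$ along the directed line $\ell$ through $O$ and $A$. As the rocket leaves $O$ in the direction of $A$ and descends to $+\infty$, then wraps to $-\infty$ and returns to $O$, the lower facets are exactly those whose hyperplanes are crossed on the outbound portion through $A$: once a hyperplane is crossed along the outbound ray the corresponding facet stays visible, and $A$ sees precisely the lower facets, so they occur as a contiguous \emph{initial} block $F_1,\dots,F_m$ of the full shelling, the upper facets forming the remaining tail. Next I would invoke the reversibility of line shellings: reversing the direction of $\ell$ yields the reverse of the full shelling, which is again a Bruggesser--Mani shelling of $\partial P'$. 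In this reversed shelling the lower facets form the \emph{final} contiguous block, in the order $F_m,\dots,F_1$, so that $F_1$ now appears as the last lower facet. The key step is to recognize this reversed restricted order as itself a Bruggesser--Mani shelling of $DT(P)$ in the sense of the paper, realized from a low viewpoint on the opposite ray; granting this, $F_1$ is also a BMD-ear.

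Finally, since $P$ is generic and not a simplex, the lower hull of $P'$ has at least two facets, so $m\ge 2$ and the first and last hyperplanes crossed along $OA$ are distinct, giving $F_1\neq F_m$ and hence at least two distinct Delaunay BM-ears. Repeating the argument verbatim with the upper facets and $UDT(P)$ yields at least two upper Delaunay BM-ears. I expect the main obstacle to be precisely the justification that the reversed restricted order is a \emph{genuine} paper-style BM shelling of the Delaunay complex: the lower facets form a topological $d$-ball rather than a sphere, so the clean ``reverse of a shelling is a shelling'' statement must be applied to the full sphere $\partial P'$ and then restricted with care, checking that the crossing order of the reversed walk along its return to $O$ matches the recipe ``order of hyperplane intersections with a segment from the interior outward'' for a suitable low viewpoint. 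If this matching proves delicate, a robust fallback is to argue directly that as the low viewpoint direction varies over the open cone of admissible downward directions, the last-crossed (hence extremal) lower facet takes at least two distinct values, because the outer normal cones of the $m\ge 2$ lower facets partition the downward hemisphere of directions.
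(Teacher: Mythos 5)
Your first BM-ear ($F_m$, the last lower facet crossed on the way from $O$ to a low point $A$) is correct and agrees with the paper. The fatal problem is the step you yourself flag as the main obstacle: the claim that $F_1$, the first lower facet crossed, is also a BMD-ear. This is not merely delicate to justify --- it is false in general. A BMD-ear is by definition the last simplex of a shelling of $DT(P)$ produced by the Bruggesser--Mani recipe, i.e.\ from a viewpoint that sees exactly the lower facets of $P'$; such viewpoints are precisely the interior points of the polyhedron $P_-$ cut out by the lower half-spaces of all facet hyperplanes of $P'$, and the last hyperplane crossed along $OA$ is always one that supports a facet of $P_-$. By contrast, $F_1$ is simply the facet through which the ray from $O$ exits $P'$, and \emph{every} lower facet arises as $F_1$ for a suitable vertical line. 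So if your step were valid it would show that every Delaunay simplex is a BMD-ear, which the three-dimensional example in Section 5 of the paper (five D-ears, only two of which are BMD-ears) rules out. Concretely, the reversal argument breaks as follows: reversing the line does reverse the line shelling of the sphere $\partial P'$, but the restriction of that reversed order to the final block of lower facets is in general not even an abstract shelling of the ball $DT(P)$ (reversals of ball shellings need not be shellings), and in any case the reversed line's viewpoint lies on the upward ray and sees upper facets, so there is no ``low viewpoint on the opposite ray'' realizing it.

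Your fallback is closer in spirit to what the paper actually does, but its justification is also wrong: as the viewpoint varies, the last-crossed lower facet ranges only over those lower facets whose hyperplanes are non-redundant for $P_-$, not over all $m$ of them, so ``the outer normal cones of the $m\ge 2$ lower facets partition the downward directions'' does not yield two distinct values. The paper instead shows directly that at least two facets of $P_-$ are defined by lower facets of $P'$: a vertical downward ray from any interior point of $P'$ crosses, last, a lower-facet hyperplane that supports a facet of $P_-$, giving one BMD-ear $F$; repeating with an interior point whose downward ray exits $P'$ through $F$ itself forces $H_F$ to be crossed first, so the last crossed hyperplane is a different lower-facet hyperplane, giving a second BMD-ear. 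You would need to replace your reversal step by an argument of this kind.
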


\begin{proof}
We consider a $(d+1)$-dimensional polytope $P'$ that is the result
of lifting $P$ to a spherical paraboloid. As in the previous
sections, the lower facets of $P'$ give $DT(P)$ and the upper
facets give $UDT(P)$. We define the unbounded polyhedron $P_+$ as
the intersection of all upper half-spaces with all facets of $P'$.
Analogously, we define $P_-$ as the intersection of all lower
half-spaces with all facets of $P'$. Obviously the facet of $P_-$
defined by the lower facet of $P'$ is the last in some
Bruggesser-Mani shelling and corresponds to a BMD-ear of $P$.
Similarly, the facet of $P_+$ defined by the upper facet of $P'$
is the last in some Bruggesser-Mani shelling and it corresponds to
a BMUD-ear of $P$. Let us now show that there exist at least two
facets of $P_-$ defined by lower facets of $P'$ (for $P_+$ the
proof is the same). We take any point inside $P'$ and the vertical
ray from this point to $x_{d+1}=-\infty$. The last hyperplane
intersecting this ray is a facet of $P_-$ and it cannot be a
hyperplane defined by some upper facet of $P'$. Thus we already
have one BMD-ear. Now let $F$ be the facet of $P'$ defining the
facet of $P_-$ observed earlier. Let us take a point inside $P'$
such that the vertical ray from this point $x_{d+1}=-\infty$
intersects only $F$ out of all facets of $P'$. Then the last
hyperplane intersecting this ray is not a hyperplane defined by
$F$ and it gives us one more BMD-ear.

\end{proof}

\begin{thm}
For each convex generic simplicial $d$-dimensional polytope $P$
there exist at least $d+1$ BM-ears.
\end{thm}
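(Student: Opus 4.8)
The plan is to reuse the two unbounded polyhedra $P_-$ and $P_+$ introduced in the proof of Theorem~\ref{thm:2 BM-ears} and to count their facets simultaneously. First I would observe that in fact \emph{every} facet of $P_-$ is a BMD-ear and every facet of $P_+$ is a BMUD-ear. Write $P'$ as lying between the graph of its convex lower-hull function $g_-\colon \R^d\to\R$ and the graph of its concave upper-hull function $g_+$, so that on each lower facet $F$ the function $g_-$ is affine with gradient $2c_F$, where $c_F$ is the circumcenter of the corresponding Delaunay simplex, and likewise $g_+$ has gradient $2c'_G$ on each upper facet $G$. Since every lower-hull supporting hyperplane lies below $g_-\le g_+$ and every upper-hull supporting hyperplane lies above, the constraint coming from an upper facet is redundant in the intersection defining $P_-$; hence $P_-$ is cut out by the lower facets alone, and each of its facets is a lower facet, i.e.\ a BMD-ear (by the argument of Theorem~\ref{thm:2 BM-ears}). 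The symmetric statement holds for $P_+$. Because a simplex cannot be simultaneously Delaunay and upper Delaunay, these two families of ears are disjoint, so the number of BM-ears is exactly $f_-+f_+$, where $f_\pm$ denotes the number of facets of $P_\pm$. It therefore suffices to prove $f_-+f_+\ge d+1$.

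The key idea is that one needs at least $d+1$ vectors to span $\R^{d+1}$. The outward facet normals of $P_-$ have the form $(2c_F,-1)$ and those of $P_+$ have the form $(-2c'_G,+1)$, so altogether we have $f_-+f_+$ normal vectors in $\R^{d+1}$. I would show that these vectors span $\R^{d+1}$; the bound $f_-+f_+\ge d+1$ is then immediate. Spanning of the normals is equivalent to spanning of their horizontal parts: the span contains all differences $c_F-c_{F'}$, all differences $c'_G-c'_{G'}$, and the vector $c_{F_0}-c'_{G_0}$, so it is all of $\R^{d+1}$ unless there is a unit $\bar u\in\R^d$ orthogonal to all of these, i.e.\ unless every circumcenter $c_F$ and $c'_G$ lies on a single hyperplane $\{x:\langle x,\bar u\rangle=\alpha\}$.

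The hard part --- the lemma on which everything rests --- is to rule out this degenerate configuration. I would argue as follows. Saying that all circumcenters have the same $\bar u$-coordinate $\alpha$ means exactly that $\langle \nabla g_-,\bar u\rangle\equiv 2\alpha\equiv\langle\nabla g_+,\bar u\rangle$, so the gap function $w:=g_+-g_-$ has vanishing directional derivative along $\bar u$ and is therefore constant on every line parallel to $\bar u$. On the other hand $w\ge 0$, and because the lifting is onto the strictly convex paraboloid, $w$ vanishes only at the finitely many lifted vertices: in the relative interior of any facet the lower hull lies strictly below, and the upper hull strictly above, the paraboloid. Taking any vertex $X_i$, constancy of $w$ along $X_i+\R\bar u$ forces $w\equiv 0$ on a whole segment through the interior of $P$, contradicting that $w$ has only finitely many zeros. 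Hence no such $\bar u$ exists, the normals span $\R^{d+1}$, and $f_-+f_+\ge d+1$.

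I expect the main obstacle to be precisely this last step: controlling the zero set of $w=g_+-g_-$ and making rigorous that genericity (no $d+2$ cospherical vertices) together with full-dimensionality of $P$ forbids all circumcenters from being coplanar. Once the gap-function argument is in place the counting is routine, and the appearance of $d+1$ is explained structurally as the minimal number of facet normals needed to span the $(d+1)$-dimensional lifting space.
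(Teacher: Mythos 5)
There is a genuine gap, and it occurs at the very first step: the claim that every facet of $P_-$ is defined by a lower facet of $P'$ (equivalently, that the half-space constraints coming from upper facets are redundant in the intersection defining $P_-$). Your justification --- that each lower supporting hyperplane lies below each upper supporting hyperplane --- is valid only over the polytope $P$ itself, where the hull functions $g_\pm$ are defined; outside $P$ the affine functions can cross in the opposite order. Concretely, take $d=2$ and the convex quadrilateral $A=(0,0)$, $B=(1,\varepsilon)$, $C=(2,0)$, $D=(1,-\varepsilon)$ with $\varepsilon$ small. The Delaunay triangles are $ABD$ and $BCD$, with circumcenters near $(1/2,0)$ and $(3/2,0)$, while the upper Delaunay triangles $ABC$ and $ACD$ have circumcenters near $(1,\mp 1/(2\varepsilon))$, far outside the convex hull of the Delaunay circumcenters. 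Since the hyperplane of the facet of $P'$ over a simplex with circumcenter $c$ is $x_{d+1}=2\langle c,x\rangle+\beta$, at points $(1,s)$ with $s\gg 0$ the lowest of all these hyperplanes is the one over the \emph{upper} facet $ABC$; hence that hyperplane cuts out an unbounded facet of $P_-$ which corresponds to no Delaunay simplex and therefore to no BMD-ear. So the identification ``number of BM-ears $=f_-+f_+$'' fails, and your final bound $f_-+f_+\ge d+1$, even granted, does not bound the number of BM-ears. (Two smaller, fixable issues: the gap function $w=g_+-g_-$ vanishes on all of $\partial P$, not only at the lifted vertices, though the contradiction with constancy along a line through the interior survives; and non-spanning of the facet normals a priori constrains only the circumcenters of the non-redundant constraints --- repairable by noting that a line contained in $P_-$ must be parallel to every defining hyperplane, redundant or not.)

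The paper avoids exactly this trap. It introduces the cone $T_+$ cut out by the translates through the origin of \emph{all} facet hyperplanes of $P'$; its facets correspond to the circumcenters that are extreme in some direction among all circumcenters, lower and upper together. For each such extreme direction, the parallel unbounded facet of $P_-$ (or of $P_+$) is necessarily defined by the facet of $P'$ realizing that extreme circumcenter, and one then gets a BMD-ear or a BMUD-ear according to whether that facet is lower or upper --- so each facet of $T_+$ yields at least one genuine BM-ear, and pointedness of $T_+$ gives $d+1$ of them. Your spanning-of-normals idea is the same mechanism in dual form, and your gap-function argument for why the circumcenters cannot all lie on a hyperplane (equivalently, why $T_+$ is pointed and has at least $d+1$ facets) addresses a point the paper asserts without proof; that part of your proposal is worth salvaging. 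But as written the proposal does not prove the theorem.
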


\begin{proof}
As in the proof of the previous theorem, we are
interested in facets of $P_+$ defined by upper facets of $P'$ and
facets of $P_-$ defined by lower facets of $P'$.

For each facet of $P'$ consider a hyperplane parallel to this
facet and passing through the origin. Consider the unbounded
polyhedron $T_+$ given by the intersection of all upper
half-spaces defined by all such hyperplanes.

\begin{lem}
For each facet of $T_+$ there is an unbounded facet of $P_+$
parallel to it.
\end{lem}

\begin{proof}
Suppose $f$ is a facet of $T_+$. There is a ray $r$ in $f$ which
is strictly in the upper half-space for all other hyperplanes.
Hyperplane $f$ corresponds to one or two facets of $P'$ (there
cannot be more than two parallel facets for one polytope).
Consider the highest of these facets and the ray $r'=r$ (equality is
given with respect to parallel translations) lying on it. It is obvious
that, for a hyperplane defined by some facet $h$ of $P'$, the unbounded
part of $r'$ (subray $r_h$ of $r'$) lies in the upper half-space,
i.e. only a segment of this ray can be in a lower half-space. So,
$\widehat{r}=\bigcap_h r_h$ is a ray which is in our hyperplane
and is in an upper half-space for all hyperplanes defined by facets
of $P'$. This means that $f$ defines an unbounded facet of $P_+$.
\end{proof}

We can define $T_-$ as the intersection of lower half-spaces. The
same lemma connecting facets of $T_-$ and $P_-$ is true by the
same arguments. $T_+$ and $T_-$ are symmetric with respect to the
origin and have at least $d+1$ facets. By
this lemma and its analogue for $T_-$, for each of these facets there is an unbounded facet
of $P_+$ and $P_-$ parallel to it. Thus, if a facet of $P'$
defining a facet of $T_+$ were to be a lower face, then we have a BM-ear from
$P_-$, if it were to be an upper face, then we have a BM-ear from $P_+$, and if
there were to be two parallel facets defining this facet of $T_+$, then we
have BM-ears from both $P_-$ and $P_+$. So for each facet of $T_+$
we have at least one corresponding BM-ear. It means that there exist at
least $d+1$ BM-ears.
\end{proof}

\section{Remarks}

Here we want to make a remark on non-convex polytopes. The
algorithm of Section 4 allows us to construct Delaunay
triangulations of sets of points. These triangulations are always
subdivisions of the convex hull of the points into simplices.
Hence using the ideas of Section 5 (and further Section 7) we
can obtain only extremal spheres of the convex hulls but not of
the actual polytopes in the case where these polytope are not convex. For
non-convex polytope there is no direct connection between its
subsimplices and facets of $P'$ constructed by means of lifting to
a paraboloid, since facets of $P'$ correspond to subsimplices of
the convex hull. Maybe for some special cases (for instance there
exists a triangulation of the polytope which is a subset of the
Delaunay triangulation of the set of points) partial results can
be achieved but in the general case the proofs in Sections 5 and 7
do not work.

\medskip

\noindent{\bf\large Acknowledgements.} The authors are grateful to Wiktor Mogilski for his extremely
helpful comments and corrections.

\medskip

\medskip

A. Akopyan,  A. Glazyrin, O. R. Musin, A. S. Tarasov,  Department of Mathematics, University of Texas at Brownsville, 80 Fort Brown, Brownsville, TX, 78520.

 {\it E-mail addresses:} akopjan@gmail.com alexey.glazyrin@gmail.com oleg.musin@utb.edu tarasov@isa.ru

\end{document}